\documentclass[11pt,a4paper]{article}
\usepackage[utf8]{inputenc}
\usepackage[T2A]{fontenc}
\usepackage[russian,english]{babel}
\usepackage{amsmath,amsfonts,amssymb,amsthm}
\usepackage{mathtools}
\usepackage{geometry}
\usepackage{tabularx}
\usepackage[bottom, perpage]{footmisc}
\usepackage{xcolor}
\usepackage{hyperref}
\usepackage{comment}
\usepackage{cleveref}
\geometry{}

% Theorem environments
\newtheorem{theorem}{Theorem}[section]

\newtheorem{proposition}[theorem]{Proposition}

\theoremstyle{definition}
\newtheorem{definition}[theorem]{Definition}

\newtheorem{assumption}[theorem]{Assumption}

% Operators and notation

\newcommand{\F}{\mathcal{F}}

\newcommand{\Ghat}{\widehat{G}}

\newcommand{\norm}[1]{\|#1\|}

\newcommand{\ip}[2]{\langle #1, #2 \rangle}

\usepackage{titleps}
\newpagestyle{main}{%
\sethead[\thepage][][] 
        {On continuous embeddings of quantum Sobolev spaces}{}{\thepage} % Нечетные страницы (левый, центр, правый)
}
\pagestyle{main}

\title{\textbf{On сontinuous embeddings of Quantum Sobolev Spaces into Schatten classes}}
\author{Alexander Plakhotnikov \footnote{
\textsc{Student. Saint Petersburg State University, Department of Mathematics and Mechanics.} \\
\textit{Email adress:} \texttt{st132770@student.spbu.ru}}}
\date{\today}

\begin{document}

\maketitle

\begin{abstract}
This work investigates continuous embeddings for quantum Sobolev spaces $\mathfrak{H}_\gamma^{s,p}(G,H)$ into Schatten classes $S_r(H)$. We try to extend the results of Lakmon and Mensah to the case where the operators belong to Schatten classes $S_p(H)$ for $p \neq 2$. We establish that these quantum Sobolev spaces are Banach spaces and, by employing a duality argument, we define spaces for $p>2$. 
\bigskip

\noindent\textbf{Keywords:} Quantum Sobolev spaces, Schatten--von Neumann classes, quantum Fourier transform, projective representations, continuous embeddings.
\end{abstract}

\section{Introduction}

The foundational work of Werner introduced a quantum Fourier transform linked to projective representations of locally compact abelian (LCA) groups, establishing a correspondence between operator ideals and function spaces \cite{Werner1984}. This framework has been systematically developed, for instance, by F\"uhr \cite{Fuehr2005} and more recently by Fulsche and Galke \cite{FulscheGalke2025}.

Within this operator-valued Fourier analysis, quantum Sobolev spaces emerge as a natural generalization of their classical counterparts. They are defined by imposing weighted integrability conditions on the quantum Fourier transform of an operator. For the case $p=2$, these spaces consist of Hilbert-Schmidt operators and are analogous to classical Bessel potential spaces \cite{LakmonMensah2025}. They provide the essential functional-analytic setting for studying operator-valued partial differential equations, such as quantum Poisson-type equations and generalized bosonic string equations \cite{GorkaKostrzewaReyes2015, Mensah2021}.

Recent work has extended the theory to various contexts, including compact groups and hypergroup Gelfand pairs \cite{BatakaEgweMensah2024, KumarKumar2023}. The work by Lakmon and Mensah \cite{LakmonMensah2025} specifically analyzed the structure and embedding properties of quantum Sobolev spaces of Hilbert-Schmidt operators.

The primary objective of this paper is to extend the results of Lakmon and Mensah \cite{LakmonMensah2025} to the case where the operators belong to Schatten classes $S_p(H)$ for $p \neq 2$. Our methodology adapts classical techniques, particularly the use of Hölder's inequality on the Fourier domain combined with the Hausdorff-Young inequality for the quantum Fourier transform.
\section{Preliminaries}

Let \(H\) be a separable complex Hilbert space. For a compact operator \(T\) on \(H\) denote by \((s_n(T))_{n\ge1}\) its singular values (nonincreasing). For \(0<p<\infty\) define the Schatten class \(S_p(H)\) by
\[
S_p(H) = \bigg\{ T\in K(H): \, \|T\|_{S_p}:= \bigg(\sum_{n}s_n(T)^p\bigg)^{1/p}<\infty\bigg\}
\]
Important special cases: \(S_1(H)\) (trace class), \(S_2(H)\) (Hilbert--Schmidt), \(S_\infty(H)=B(H)\) (bounded operators). For \(1\le p\le\infty\) the space \(S_p(H)\) is a Banach space; for \(p=2\) it is a Hilbert space with inner product \(\langle W,T\rangle_{S_2}=\mathrm{tr}(T^*W)\). Schatten classes satisfy Hölder-type inequalities: if \(1/p+1/q=1/r\) (with \(p,q,r\in[1,\infty]\)), then \(\|AB\|_{S_r}\le \|A\|_{S_p}\|B\|_{S_q}\) for appropriate \(A,B\).

Let $G$ be a locally compact abelian  group with Pontryagin dual $\Ghat$. Note that $\Ghat$ is also LCA. A projective unitary representation \(\pi:G\to U(H)\) with multiplier \(m\) yields, under additional integrability assumptions, a quantum Fourier transform \(F_U\) mapping operators to functions on \(\widehat G\).

\begin{definition}[~\cite{LakmonMensah2025}, Definition 2.1]
A map $\pi: G \to U(H)$ is called a \emph{projective representation with Heisenberg multiplier} if:
\begin{enumerate}
\item $\pi(x)\pi(y) = m(x,y)\pi(xy)$ for all $x,y \in G$
\item $m: G \times G \to \mathbb{T}$ is a Heisenberg multiplier
\item $m(x,y) = \overline{m(x^{-1}, y^{-1})}$ for all $x,y \in G$
\end{enumerate}
\end{definition}

\begin{definition}[\cite{LakmonMensah2025}, Definition 2.2]
For \( T \in S_1(H) \), the Fourier transform of \( T \) is given by
\begin{equation} \label{eq:fourier_transform}
        \F_U(T)(\xi) = \operatorname{tr}(T \pi(\xi)^*), \quad \xi \in \Ghat
\end{equation}    
Where \( \operatorname{tr}(A) \) is the trace of the operator \( A \). If \( T \in S_1(H) \) is such that \( \F_U(T) \in L^1(\Ghat) \), then the reconstruction formula is given by
    \begin{equation} \label{eq:reconstruction_formula}
        T = \int_{\Ghat} \F_U(T)(\xi) \,\pi (\xi) \, d\xi.
    \end{equation}
\end{definition}

\begin{assumption}[\cite{FulscheGalke2025}]
\label{ass:integrability}
We assume that the projective representation $\pi$ is \emph{integrable}: there exists a non-zero vector $\psi \in H$ such that
$$x \mapsto \ip{\pi(x)\psi}{\psi} \in L^1(G)$$
\end{assumption}

We have the following well-established results:

\begin{theorem}
\label{thm:qft-properties}
\begin{enumerate}
\item (~\cite{FulscheGalke2025}, Theorem 6.23) Assume that the representation is integrable. Then, the Fourier transform extends to a unitary operator $\mathcal{F}_U : S_2(H) \longrightarrow L^2(\Ghat)$ with adjoint $\mathcal{F}^{ -1}_U : L^2(\Ghat) \longrightarrow S_2(H)$

\item (~\cite{FulscheGalke2025}, Proposition 6.25) Assume that the representation is integrable. Let $p \in [1, 2]$ and let $q$ be such that $1/p+1/q =1$. Then

$$||\F_U(T)||_{L^q (\Ghat)} \leq ||T||_{S_p(H)} \qquad T \in S_p(H)$$

\item (Hausdorff-Young inequality, \cite{FulscheGalke2025}, Proposition 6.26) Assume that the representation is integrable. Let $p \in [1, 2]$ and let $q$ be such that $1/p+1/q =1$. Then 

$$ ||\mathcal{F}^{ -1}_U (f)||_{S_q(H)} \leq ||f||_{L^p(\Ghat)} \qquad f \in L^p(\Ghat) $$

\end{enumerate}
\end{theorem}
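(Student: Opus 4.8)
The plan is to derive everything from part~(1), the Plancherel identity, and then obtain parts~(2) and~(3) from it by interpolation (with part~(3) alternatively by duality). For part~(1) I would test $\F_U$ on finite‑rank operators: for $\psi,\phi\in H$ the function $\F_U(|\psi\rangle\langle\phi|)(\xi)=\operatorname{tr}\!\big(|\psi\rangle\langle\phi|\,\pi(\xi)^*\big)$ is, up to complex conjugation, the matrix coefficient $\xi\mapsto\ip{\pi(\xi)\phi}{\psi}$ of the projective representation $\pi$. Under Assumption~\ref{ass:integrability} the representation is square‑integrable in the sense of the Werner--F\"uhr operator calculus, hence admissible, and its orthogonality relations hold; with the Haar measure on $\Ghat$ normalised so that the admissibility constant equals $1$, these relations say precisely that $\ip{\F_U(A)}{\F_U(B)}_{L^2(\Ghat)}=\ip{A}{B}_{S_2(H)}$ for all rank‑one $A,B$, once $\operatorname{tr}(T^*W)$ is identified with the Hilbert--Schmidt inner product. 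Thus $\F_U$ is isometric on the linear span of rank‑one operators, which is dense in $S_2(H)$, and extends to an isometry $\F_U\colon S_2(H)\to L^2(\Ghat)$; its range is closed and dense (the matrix coefficients $\ip{\pi(\xi)\phi}{\psi}$ span a dense subspace of $L^2(\Ghat)$), so the extension is onto and hence unitary. A unitary has inverse equal to its adjoint, and comparison with the reconstruction formula~\eqref{eq:reconstruction_formula} on the dense subspace $S_1(H)\cap S_2(H)$ identifies this adjoint as $\F_U^{-1}$.

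For parts~(2) and~(3) I would interpolate between two trivial endpoints. For $\F_U$ one has $\|\F_U(T)\|_{L^\infty(\Ghat)}\le\|T\|_{S_1(H)}$ — immediate from $|\operatorname{tr}(T\pi(\xi)^*)|\le\|T\|_{S_1}\,\|\pi(\xi)^*\|_{B(H)}=\|T\|_{S_1}$ — and $\|\F_U(T)\|_{L^2(\Ghat)}=\|T\|_{S_2(H)}$ from part~(1). Symmetrically, for $\F_U^{-1}$ one has $\|\F_U^{-1}(f)\|_{B(H)}\le\|f\|_{L^1(\Ghat)}$ — immediate from the Bochner integral $\F_U^{-1}(f)=\int_{\Ghat}f(\xi)\pi(\xi)\,d\xi$ together with $\|\pi(\xi)\|=1$ — and $\|\F_U^{-1}(f)\|_{S_2(H)}=\|f\|_{L^2(\Ghat)}$ again from part~(1). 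After checking that the trace‑formula definition of $\F_U$ on $S_1$ and the Plancherel extension on $S_2$ agree on $S_1\cap S_2$ (so that $\F_U$ is one well‑defined operator on $S_1+S_2$, and likewise $\F_U^{-1}$ on $L^1+L^2$), I would invoke the Riesz--Thorin/Calder\'on interpolation theorem with the standard complex‑interpolation identities $[S_1,S_2]_\theta=S_p$ for $1/p=1-\theta/2$, $[L^\infty,L^2]_\theta=L^q$ for $1/q=\theta/2$, and correspondingly $[L^1,L^2]_\theta=L^p$, $[B(H),S_2]_\theta=S_q$. Choosing $\theta=2/q$, i.e.\ $1/p+1/q=1$, turns the two unit endpoint norms into $\|\F_U\|_{S_p(H)\to L^q(\Ghat)}\le1$ and $\|\F_U^{-1}\|_{L^p(\Ghat)\to S_q(H)}\le1$, which are exactly~(2) and~(3).

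Alternatively, for $1<p\le2$ part~(3) is simply the Banach‑space adjoint of part~(2): since $q=p'$ one has $(L^q)^*=L^p$ and $(S_p)^*=S_q$, and the adjoint of $\F_U$ with respect to the trace pairing is $\F_U^{-1}$ (this is the content of the reconstruction formula), so the bound $\F_U\colon S_p\to L^q$ dualises to $\F_U^{-1}\colon L^p\to S_q$ with the same norm; the endpoint $p=1$ was handled directly above. The genuine content sits entirely in part~(1), specifically in the orthogonality relations: one must verify that a projective unitary representation with Heisenberg multiplier satisfying Assumption~\ref{ass:integrability} is admissible and that the Haar measure on $\Ghat$ can be normalised so that the admissibility constant is exactly $1$ — this is where the Heisenberg‑multiplier hypotheses are actually used, and it is the substance of the cited results of Werner, F\"uhr and Fulsche--Galke. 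Once that normalisation is pinned down, parts~(2) and~(3) are formal; the only care required is the bookkeeping that the operators are consistently defined on the sum spaces $S_1+S_2$ and $L^1+L^2$ before interpolating, plus the (well known but not elementary) complex‑interpolation identities for Schatten ideals.
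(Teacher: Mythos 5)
The paper itself offers no proof of this theorem: all three items are imported verbatim from Fulsche--Galke (Theorem 6.23, Propositions 6.25 and 6.26) as background facts. Your sketch is, in outline, exactly how those cited results are established in the literature (Werner, F\"uhr, Fulsche--Galke): Plancherel via the orthogonality relations for the integrable/square-integrable projective representation, the trivial endpoints $\|\F_U(T)\|_{L^\infty}\le\|T\|_{S_1}$ and $\|\F_U^{-1}(f)\|_{B(H)}\le\|f\|_{L^1}$, and then complex interpolation with $[S_1,S_2]_\theta=S_p$, $[B(H),S_2]_\theta=S_q$ (or, for item~3, duality against item~2), so there is no divergence of method to report --- only that the paper delegates the whole argument to the citation while you reconstruct it. Two points in your write-up carry essentially all of the substance and are only asserted, which is acceptable here since they are precisely the content of the cited Theorem 6.23, but worth naming: (a) the formula $\F_U(T)(\xi)=\operatorname{tr}(T\pi(\xi)^*)$ with $\xi\in\Ghat$ presupposes the identification of $G$ with $\Ghat$ induced by the Heisenberg multiplier (this is where that hypothesis enters, together with irreducibility, to get admissibility and to normalise the Haar measure so the admissibility constant is $1$); and (b) surjectivity of the $S_2\to L^2(\Ghat)$ isometry, i.e.\ density of the matrix coefficients in $L^2(\Ghat)$, is the nontrivial half of Plancherel and does not follow from the orthogonality relations alone. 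Your care about consistency of $\F_U$ on $S_1\cap S_2$ before interpolating, and your handling of the $p=1$ endpoint separately in the duality variant (where $(L^\infty)^*\neq L^1$), are both correct.
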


Fix a measurable weight function $\gamma: \Ghat \to (0,\infty)$ and smoothness parameter $s > 0$. 

\begin{definition}[see ~\cite{LakmonMensah2025}, Definition 3.1]
\label{def:sobolev-p-intermediate}
Let $1/p + 1/q = 1$. We call (quantum) Sobolev space the following set:
$$\mathfrak{H}_\gamma^{s,p}(G,H) := \bigg\{T \in S_p(H) : (1 + \gamma(\cdot)^2)^{s/2} \F_U(T) \in L^q(\Ghat)\bigg\}$$
with norm
$$\norm{T}_{\mathfrak{H}_\gamma^{s,p}} = \norm{(1 + \gamma(\cdot)^2)^{s/2} \F_U(T)}_{L^q(\Ghat)}$$
\end{definition}

We understand embedding in the sense of \cite{LakmonMensah2025}. 

\begin{assumption}
\label{ass:p-one}
We assume that $\F_U^{-1}: L^q(\Ghat) \to S_p(H)$ is bounded for relevant $q,p$.
\end{assumption}

\section{Main Results}

\begin{theorem}
\label{thm:topological-iso}
Let $1 < p < 2$. $\mathfrak{H}_\gamma^{s,p}(G, H)$ is a Banach space. The mapping \\$\Phi : T \mapsto (1+\gamma(\cdot)^2)^{s/2}\F_U(T)$ establishes a topological isomorphism between $\mathfrak{H}_\gamma^{s,p}(G, H)$ and its image in $L^q(\hat{G})$. The image of $\Phi$ is a closed subspace of $L^q(\hat{G})$, and $\Phi$ is a homeomorphism onto its image.
\end{theorem}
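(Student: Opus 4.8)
The plan is to establish, in order: (i) that $\norm{\cdot}_{\mathfrak{H}_\gamma^{s,p}}$ is a genuine norm on a vector space; (ii) that $\Phi$ is a linear isometry onto its image, which immediately yields the topological-isomorphism and homeomorphism assertions; and (iii) that $\mathfrak{H}_\gamma^{s,p}(G,H)$ is complete, from which closedness of $\Phi(\mathfrak{H}_\gamma^{s,p}(G,H))$ follows. Since $1<p<2$ we have $S_p(H)\subseteq S_2(H)$, so by \Cref{thm:qft-properties} the transform $\F_U$ is defined and injective on $S_p(H)$; linearity of $\F_U$ and of multiplication by the fixed weight $(1+\gamma(\cdot)^2)^{s/2}$ shows that $\mathfrak{H}_\gamma^{s,p}(G,H)$ is a linear subspace of $S_p(H)$. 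Homogeneity and the triangle inequality for $\norm{\cdot}_{\mathfrak{H}_\gamma^{s,p}}$ are inherited from $\norm{\cdot}_{L^q(\Ghat)}$; for definiteness, $\norm{T}_{\mathfrak{H}_\gamma^{s,p}}=0$ forces $(1+\gamma(\cdot)^2)^{s/2}\F_U(T)=0$ a.e., and since $\gamma>0$ everywhere the weight is strictly positive, so $\F_U(T)=0$ and hence $T=0$ by injectivity.

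By \Cref{def:sobolev-p-intermediate} one has $\norm{\Phi(T)}_{L^q(\Ghat)}=\norm{T}_{\mathfrak{H}_\gamma^{s,p}}$ for all $T$, so $\Phi$ is a linear isometry of $\mathfrak{H}_\gamma^{s,p}(G,H)$ onto $\Phi(\mathfrak{H}_\gamma^{s,p}(G,H))\subseteq L^q(\Ghat)$, with isometric (hence continuous) inverse. Consequently $\Phi$ is automatically a homeomorphism onto its image and a topological isomorphism. Because $L^q(\Ghat)$ is complete, the subspace $\Phi(\mathfrak{H}_\gamma^{s,p}(G,H))$ is closed precisely when it is complete, and — transporting along the isometry $\Phi$ — this happens precisely when $\mathfrak{H}_\gamma^{s,p}(G,H)$ is complete. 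Thus all four assertions reduce to proving that $\mathfrak{H}_\gamma^{s,p}(G,H)$ is a Banach space.

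To prove completeness, let $(T_n)$ be Cauchy in $\mathfrak{H}_\gamma^{s,p}(G,H)$ and set $g_n:=(1+\gamma(\cdot)^2)^{s/2}\F_U(T_n)$, a Cauchy sequence in $L^q(\Ghat)$ with limit $g$. Put $f:=(1+\gamma(\cdot)^2)^{-s/2}g$; since $s>0$ and $\gamma>0$ we have $0<(1+\gamma(\cdot)^2)^{-s/2}\le 1$, so multiplication by this weight is a contraction on $L^q(\Ghat)$, whence $f\in L^q(\Ghat)$ and $\F_U(T_n)=(1+\gamma(\cdot)^2)^{-s/2}g_n\to f$ in $L^q(\Ghat)$. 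By \Cref{ass:p-one} the inverse transform $\F_U^{-1}\colon L^q(\Ghat)\to S_p(H)$ is bounded, so $T_n=\F_U^{-1}(\F_U(T_n))\to T:=\F_U^{-1}(f)$ in $S_p(H)$ (the $L^2$- and $L^q$-meanings of $\F_U^{-1}$ coinciding on $L^2(\Ghat)\cap L^q(\Ghat)$, cf.\ below). A short approximation argument — writing $f$ as the $L^q$-limit of functions in $L^2(\Ghat)\cap L^q(\Ghat)$ (e.g.\ elements of $C_c(\Ghat)$) and using boundedness of $\F_U^{-1}\colon L^q\to S_p$, of $\F_U\colon S_p\to L^q$ (\Cref{thm:qft-properties}), and the relation $\F_U\F_U^{-1}=\mathrm{id}$ on $L^2(\Ghat)$ — identifies $\F_U(T)=f$. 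Therefore $(1+\gamma(\cdot)^2)^{s/2}\F_U(T)=g\in L^q(\Ghat)$, so $T\in\mathfrak{H}_\gamma^{s,p}(G,H)$ and $\norm{T_n-T}_{\mathfrak{H}_\gamma^{s,p}}=\norm{g_n-g}_{L^q(\Ghat)}\to 0$.

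The first two paragraphs are routine. The heart of the argument, and the expected main obstacle, is the step showing that the $\mathfrak{H}_\gamma^{s,p}$-norm dominates the $S_p$-norm: this is exactly what forces us to invoke \Cref{ass:p-one}, a reverse Hausdorff--Young-type bound $\norm{\F_U^{-1}(f)}_{S_p}\lesssim\norm{f}_{L^q}$, which is not available for free when $q>2$ — the elementary estimate $(1+\gamma(\cdot)^2)^{s/2}\ge 1$ then upgrades $\mathfrak{H}_\gamma^{s,p}$-Cauchyness to $S_p$-Cauchyness. The only other subtlety is a consistency issue: since for a general LCA group neither $L^q(\Ghat)\subseteq L^2(\Ghat)$ nor the reverse inclusion holds, one must verify that the bounded map $\F_U^{-1}\colon L^q\to S_p$ furnished by \Cref{ass:p-one} genuinely inverts $\F_U$ and agrees with the unitary $\F_U^{-1}$ of \Cref{thm:qft-properties} on $L^2(\Ghat)\cap L^q(\Ghat)$; this is precisely what the density argument in the previous paragraph supplies.
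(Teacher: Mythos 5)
Your proof is correct, and it takes a genuinely different route from the paper's. The paper proves closedness of the image by a weak-compactness argument (boundedness of $(T_n)$ in the reflexive space $S_p(H)$, Banach--Alaoglu, identification of weak and strong limits) and then invokes the Open Mapping Theorem to get continuity of $\Phi^{-1}$; its completeness step only produces the $L^q$-limit $g$ and leaves the construction of the limit operator implicit. You instead observe that $\Phi$ is an isometry by the very definition of the norm, so the topological-isomorphism and homeomorphism claims are immediate (no OMT needed), and closedness of the image is equivalent, via the isometry, to completeness of $\mathfrak{H}^{s,p}_\gamma$; you then prove completeness constructively, setting $f=(1+\gamma^2)^{-s/2}g$, recovering $T=\F_U^{-1}(f)\in S_p(H)$ from Assumption~\ref{ass:p-one}, and identifying $\F_U(T)=f$ by a density/consistency argument through $C_c(\widehat G)\subset L^2\cap L^q$. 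What each buys: your route is leaner (no reflexivity, no weak topologies, no OMT) and actually supplies the limit operator that the paper's Step~1 omits, but it makes the reliance on Assumption~\ref{ass:p-one} (a reverse Hausdorff--Young bound, plus the tacit requirement that the assumed extension of $\F_U^{-1}$ to $L^q$ agrees with the unitary inverse on $L^2\cap L^q$) fully explicit; the paper's route appears to avoid that assumption, but in fact needs the same bound to justify that $(T_n)$ is bounded in $S_p(H)$, and its claim that $\Phi(T_{n_k})\rightharpoonup\Phi(T)$ from weak convergence in $S_p(H)$ is not justified (multiplication by the unbounded weight is not a bounded map $L^q\to L^q$), so your version is, if anything, the more complete argument.
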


\begin{proof}

\medskip
1. Let $(T_n)$ be a Cauchy sequence in $\mathfrak{H}_\gamma^{s,p}(G,H)$. We have
\[
\|T_n - T_m\|_{\mathfrak{H}_\gamma^{s,p}}
  = \big\|(1+\gamma(\cdot)^2)^{s/2}\big(\F_U(T_n)-\F_U(T_m)\big)\big\|_{L^q(\widehat G)}
\]
Thus the condition that $(T_n)$ is Cauchy in $\mathfrak{H}_\gamma^{s,p}$ is 
\[
\forall\,\varepsilon>0\ \exists N\ \forall n,m\ge N:\ 
\big\|(1+\gamma(\cdot)^2)^{s/2}\big(\F_U(T_n)-\F_U(T_m)\big)\big\|_{L^q(\widehat G)}<\varepsilon
\]
In other words, the sequence
\[
\big((1+\gamma(\cdot)^2)^{s/2}\F_U(T_n)\big)_{n\in\mathbb N}
\]
is a Cauchy sequence in $L^q(\widehat G)$. Since $L^q(\widehat G)$ is complete, there exists
\[
g\in L^q(\widehat G)\quad\text{such that}\quad
(1+\gamma(\cdot)^2)^{s/2}\F_U(T_n)\to g \ \text{ in }L^q(\widehat G)
\]

\bigskip

\textsc{2: $\Phi$ is continuous.} This follows immediately from the definition: $\|\Phi(T)\|_{L^q} = \|T\|_{\mathfrak{H}_\gamma^{s,p}}$.

\bigskip

\textsc{$\Phi$ is injective.} If $\Phi(T) = 0$, then $(1 + \gamma(\cdot)^2)^{s/2} \F_U(T) = 0$ a.e. Since $\gamma(\xi) > 0$, we have $\F_U(T) = 0$ a.e. By injectivity of $\F_U$ on $S_p(H)$ (from Hausdorff-Young), we get $T = 0$.

\bigskip

\textsc{4.Image of $\Phi$ is closed in $L^q(\Ghat)$.}  
Let $(T_n)$ be a sequence in $\mathfrak{H}_\gamma^{s,p}(G,H)$ such that $\Phi(T_n)$ converges in $L^q(\hat{G})$ to some $g$. Since $(T_n)$ is bounded in the reflexive Banach space $S_p(H)$, by the Banach--Alaoglu theorem there exists a subsequence $(T_{n_k})$ weakly converging to some $T \in S_p(H)$. The continuity of $\Phi$ implies that $\Phi(T_{n_k})$ converges weakly to $\Phi(T)$ in $L^q(\hat{G})$. As the subsequence $\Phi(T_{n_k})$ also converges strongly to $g$, the weak limit must coincide with the strong limit, hence $\Phi(T) = g$. Therefore, $g$ lies in the image of $\Phi$, proving that the image is closed in $L^q(\hat{G})$.

Since $||T_n||_{\mathfrak{H}^{s,p}_\gamma} = ||f_n||_{L^q}$ is bounded, and $S_p(H)$ is reflexive  for $1 < p < \infty$, there exists a weakly convergent subsequence $(T_{n_k}) \rightarrow T \text{  in } S_p(H)$.

\bigskip

\textsc{5: $\Phi^{-1}$ is continuous on $\text{Im}(\Phi)$.} By the Open Mapping Theorem, since $\Phi: \mathfrak{H}_\gamma^{s,p}(G,H) \to \text{Im}(\Phi)$ is a continuous bijection between Banach spaces, it is a homeomorphism.
\end{proof}

Let $G$ be a locally compact abelian group with Haar measure $\mu$, and assume $G$ is $\sigma$-compact. Let $\pi$ be an integrable projective representation.  Fix $s>0$ and $1\le p'<2$, and set $q'$ by $1/p'+1/q'=1$. Let
$\gamma:\widehat G\to(0,\infty)$ be measurable and assume the local-boundedness condition $(1+\gamma^2)^{s/2}\in L^\infty_{\mathrm{loc}}(\widehat G).$ Denote by $C_0(\Ghat)$ continuous functions on $\Ghat$ with compact support.

Define the test family
\[
\mathcal{D}:=\big\{\,W_\varphi:=\mathcal{F}_U^{-1}\big((1+\gamma^2)^{s/2}\varphi\big):\ \varphi\in C_0(\widehat G)\,\big\}
\]
and equip $\mathcal D$ with the norm $\|W_\varphi\|_{\mathfrak{H}^{-s,p'}_\gamma}:=\|\varphi\|_{L^{q'}(\widehat G)}$. Then we have

\begin{proposition}
\label{prop:W2-construction}
\begin{enumerate}
  \item $\mathcal D$ is nonempty, and the map $\Psi:\mathcal D\to C_0(\widehat G)$,
    $\Psi(W_\varphi)=\varphi$, is a linear isometry from $(\mathcal D,\|\cdot\|_{\mathfrak{H}^{-s,p'}_\gamma})$ onto $C_0(\widehat G)$ equipped with the $L^{q'}$--norm.
  \item The completion
    \[
      \mathfrak{H}^{-s,p'}_\gamma:=\overline{\mathcal D}_{\|\cdot\|_{\mathfrak{H}^{-s,p'}_\gamma}}
    \]
    is a (nontrivial) Banach space. Under the standing hypothesis $C_0(\widehat G)$ is dense in $L^{q'}(\widehat G)$, hence $\mathfrak{H}^{-s,p'}_\gamma$ is isometrically isomorphic to $L^{q'}(\widehat G)$.
  
\end{enumerate}
\end{proposition}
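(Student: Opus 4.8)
The plan is to verify the two assertions by unwinding the definitions and checking that $\Psi$ is a well-defined bijective isometry, then invoking standard completion theory. For part (1), the first task is to confirm $\mathcal D$ is nonempty: this is immediate since $C_0(\widehat G)$ contains nonzero functions (as $\widehat G$ is a nontrivial locally compact group), and the local-boundedness hypothesis $(1+\gamma^2)^{s/2}\in L^\infty_{\mathrm{loc}}(\widehat G)$ ensures that for $\varphi\in C_0(\widehat G)$ the product $(1+\gamma^2)^{s/2}\varphi$ is a bounded compactly supported measurable function, hence lies in $L^{p'}(\widehat G)$ for every $p'\in[1,2]$, so that $\mathcal{F}_U^{-1}\big((1+\gamma^2)^{s/2}\varphi\big)$ makes sense as an element of $S_{q'}(H)$ by the Hausdorff-Young inequality (Theorem~\ref{thm:qft-properties}(3)). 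The key point requiring care here is that $\Psi$ is \emph{well-defined}: if $W_\varphi = W_{\varphi'}$ then one must show $\varphi = \varphi'$, equivalently that $\varphi\mapsto \mathcal F_U^{-1}((1+\gamma^2)^{s/2}\varphi)$ is injective on $C_0(\widehat G)$. This follows because $\mathcal F_U^{-1}$ is injective on $L^1(\widehat G)\cap L^2(\widehat G)$ (it is unitary on $L^2$ by Theorem~\ref{thm:qft-properties}(1)), and since $\gamma>0$ pointwise, $(1+\gamma^2)^{s/2}\varphi = (1+\gamma^2)^{s/2}\varphi'$ a.e.\ forces $\varphi=\varphi'$ a.e., hence everywhere by continuity. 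Linearity of $\Psi$ is clear from linearity of $\mathcal F_U^{-1}$, surjectivity onto $C_0(\widehat G)$ is by construction, and the isometry property is literally the definition of the norm on $\mathcal D$, namely $\|W_\varphi\|_{\mathfrak H^{-s,p'}_\gamma} = \|\varphi\|_{L^{q'}(\widehat G)} = \|\Psi(W_\varphi)\|_{L^{q'}}$.

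For part (2), I would first note that $\|\cdot\|_{\mathfrak H^{-s,p'}_\gamma}$ is genuinely a norm on $\mathcal D$ (not merely a seminorm): positive-definiteness follows from the injectivity established in part (1), since $\|W_\varphi\|_{\mathfrak H^{-s,p'}_\gamma}=0$ implies $\varphi=0$ in $L^{q'}$, hence $\varphi=0$ and $W_\varphi=0$. The abstract completion $\overline{\mathcal D}$ of any normed space exists and is a Banach space; it is nontrivial precisely because $\mathcal D\neq\{0\}$, which we have shown. To identify $\overline{\mathcal D}$ with $L^{q'}(\widehat G)$, I would invoke the standard fact that an isometry between normed spaces extends uniquely to an isometry between their completions, and that the completion of $C_0(\widehat G)$ under the $L^{q'}$-norm is $L^{q'}(\widehat G)$ itself — this last point is exactly the standing density hypothesis that $C_0(\widehat G)$ is dense in $L^{q'}(\widehat G)$ (valid for $1\le q'<\infty$; here $q'>2$ so $1<q'<\infty$). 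Thus $\Psi$ extends to an isometric isomorphism $\overline{\Psi}:\mathfrak H^{-s,p'}_\gamma\to \overline{C_0(\widehat G)}^{\,L^{q'}} = L^{q'}(\widehat G)$.

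I expect the main obstacle — really the only nonroutine point — to be the well-definedness and injectivity of $\Psi$, i.e.\ ruling out that two distinct test functions $\varphi\neq\varphi'$ produce the same operator $W_\varphi = W_{\varphi'}$. The resolution hinges on combining the strict positivity of $\gamma$ with the injectivity of $\mathcal F_U^{-1}$ on the relevant function space; one should be slightly careful that $(1+\gamma^2)^{s/2}\varphi$ need not itself lie in $L^2(\widehat G)$ a priori if $\gamma$ is unbounded, so the injectivity of $\mathcal F_U^{-1}$ must be taken on $L^{p'}(\widehat G)$ with $1\le p'<2$, which follows from its boundedness into $S_{q'}(H)$ (Theorem~\ref{thm:qft-properties}(3)) together with agreement with the unitary $L^2$-theory on the dense intersection $L^{p'}\cap L^2$. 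Once this injectivity is in hand, the rest is bookkeeping with the universal property of completions, and no serious analytic difficulty remains.
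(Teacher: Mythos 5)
Your proof is correct and follows essentially the same route as the paper's: nonemptiness of $\mathcal D$ from the local boundedness of $(1+\gamma^2)^{s/2}$ on the compact support of $\varphi$, the isometry being definitional, and the completion identified with $L^{q'}(\widehat G)$ via density of $C_0(\widehat G)$; you are in fact more explicit than the paper on the one delicate point, namely the injectivity of $\varphi\mapsto W_\varphi$ needed for $\Psi$ (and the norm) to be well defined. Your closing worry is unfounded, though: since $(1+\gamma^2)^{s/2}$ is essentially bounded on the compact support of $\varphi$ and compact sets have finite Haar measure, $(1+\gamma^2)^{s/2}\varphi$ always lies in $L^1(\widehat G)\cap L^2(\widehat G)$, so the $L^2$-unitarity argument you give first already settles injectivity, and the sketched fallback (deducing injectivity of $\mathcal F_U^{-1}$ on all of $L^{p'}$ from boundedness plus agreement on a dense subspace, which as stated is not a valid inference) is never needed.
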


\begin{proof}
\emph{(Well-definedness of $\mathcal D$.)}
Let $\varphi\in C_0(\widehat G)$, nonzero, with compact support $K=\operatorname{supp}\varphi$. By the local-boundedness hypothesis $(1+\gamma^2)^{s/2}$ is essentially bounded on $K$, hence
\[
g:=(1+\gamma^2)^{s/2}\varphi\in L^2(\widehat G)\cap L^{q'}(\widehat G).
\]
Since $\mathcal F_U^{-1}$ is defined on $L^2(\widehat G)$, the inverse transform
$W_\varphi:=\mathcal F_U^{-1}g$ exists and lies in $S_2(H)$. By construction
\[
(1+\gamma^2)^{-s/2}\mathcal F_U(W_\varphi)=\varphi\in L^{q'}(\widehat G),
\]
so $W_\varphi\in\mathcal D$. Thus $\mathcal D\neq\varnothing$.

\emph{(Isometry onto $C_0$.)}
The map $\Psi:\mathcal D\to C_0(\widehat G)$, $\Psi(W_\varphi)=\varphi$, is linear and injective: if $\Psi(W_\varphi)=0$ a.e.\ then $g=0$ a.e.\ and $W_\varphi=0$. By definition
$\|W_\varphi\|_{\mathfrak{H}^{-s,p'}_\gamma}=\|\varphi\|_{L^{q'}}$, hence $\Psi$ is an isometry onto its image $C_0(\widehat G)$ with the $L^{q'}$--norm.

\emph{(Completion.)}
Because $\Psi$ is an isometry, the completion $\mathfrak{H}^{-s,p'}_\gamma$ of $\mathcal D$ is isometrically isomorphic to the closure of $C_0(\widehat G)$ in $L^{q'}(\widehat G)$. Under the $\sigma$-compactness hypothesis $C_0(\widehat G)$ is dense in $L^{q'}(\widehat G)$ for $1\le q'<\infty$, so the closure equals $L^{q'}(\widehat G)$. Therefore $\mathfrak{H}^{-s,p'}_\gamma$ is (isometrically) $L^{q'}(\widehat G)$ and in particular a Banach space; it is nontrivial since $C_0(\Ghat)$ contains nonzero functions.

\end{proof}

Let $s>0$ and $p>2$, and set $p'=\dfrac{p}{p-1}<2$. Let the conditions be fulfilled under which
$\mathfrak{H}^{-s,p'}_{\gamma}(G,H)$ is a Banach space. Equip $\mathfrak{H}^{-s,p'}_{\gamma}(G,H)$ with the standart norm  \ref{def:sobolev-p-intermediate} and define the bilinear pairing
\[
\langle T, W\rangle := \operatorname{tr}(T W^*),\qquad T\in S_p(H),\; W\in \mathfrak{H}^{-s,p'}_{\gamma}(G,H)
\]
Then we have

\begin{proposition}
\label{prop:pairing-duality}

\begin{enumerate} 

\item[(i)] For every fixed $T\in S_p(H)$ the linear functional
      $L_T : \mathfrak{H}^{-s,p'}_{\gamma}\to\mathbb{C}$, $L_T(W):=\langle T,W\rangle$, is continuous.
      More precisely, there exists a constant $C>0$ (depending only on $\gamma,s,p$ and the
      Hausdorff--Young constants) such that
      \[
      |\langle T,W\rangle| \le C\,\|T\|_{S_p}\,\|W\|_{\mathfrak{H}^{-s,p'}_{\gamma}},
      \qquad\forall W\in \mathfrak{H}^{-s,p'}_{\gamma}
      \]
      Hence the map $\iota: S_p(H)\to\bigl(\mathfrak{H}^{-s,p'}_{\gamma}\bigr)^*$, $\iota(T)=L_T$, is
      well defined and continuous.

\item[(ii)] The pairing is non-degenerate: if $T\in S_p(H)$ satisfies
      $\langle T,W\rangle=0$ for all $W\in \mathfrak{H}^{-s,p'}_{\gamma}$, then $T=0$; similarly,
      if $W\in \mathfrak{H}^{-s,p'}_{\gamma}$ satisfies $\langle T,W\rangle=0$ for all $T\in S_p(H)$,
      then $S=0$.

\item[(iii)] Let $\mathfrak{H}^{s,p}_\gamma$ be the closure of the image of $S_p(H)$ under $\iota$ in $(\mathfrak{H}^{-s,p'}_\gamma)^*$ endowed with the operator norm inherited from $\bigl(\mathfrak{H}^{-s,p'}_{\gamma}\bigr)^*$, yields a Banach space which is canonically and continuously embedded into $S_p(H)$ via the inverse of $\iota$ on its image. In particular every element of $\mathfrak{H}^{s,p}_{\gamma}$ can be represented by a (unique) Schatten operator and the stated norm inequality in (i) gives the continuous embedding.
\end{enumerate}
\end{proposition}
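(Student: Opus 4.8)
The plan is to dispatch the three items in order, with essentially all of the work concentrated in part~(i); parts~(ii) and~(iii) then follow by soft arguments. For part~(i) the pairing is well defined because $\mathfrak{H}^{-s,p'}_{\gamma}(G,H)\subseteq S_{p'}(H)$ and $1/p+1/p'=1$, so the Schatten--Hölder inequality gives $TW^{*}\in S_{1}(H)$ and the crude bound $|\operatorname{tr}(TW^{*})|\le\|T\|_{S_{p}}\|W\|_{S_{p'}}$. To upgrade this to an estimate with $\|W\|_{\mathfrak{H}^{-s,p'}_{\gamma}}$ on the right I would first argue on a dense set: take $T\in S_{1}(H)$ (finite-rank operators are dense in $S_{p}(H)$, hence $S_{1}(H)$ is) and $W=W_{\varphi}\in\mathcal{D}$ with $\varphi\in C_{0}(\Ghat)$, so that $g:=(1+\gamma^{2})^{s/2}\varphi$ has compact support, lies in $L^{1}\cap L^{2}(\Ghat)$, and $W=\F_U^{-1}(g)=\int_{\Ghat}g(\xi)\,\pi(\xi)\,d\xi$ by the reconstruction formula. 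Then $\F_U(T)\in L^{\infty}(\Ghat)$, $g\in L^{1}(\Ghat)$, and computing with the adjoint of the Bochner integral gives
\[
\langle T,W\rangle=\int_{\Ghat}\overline{g(\xi)}\,\operatorname{tr}\bigl(T\pi(\xi)^{*}\bigr)\,d\xi=\int_{\Ghat}\bigl[(1+\gamma(\xi)^{2})^{s/2}\F_U(T)(\xi)\bigr]\,\overline{\varphi(\xi)}\,d\xi .
\]
Since $1/p'+1/q'=1$ forces $q'=p$, Hölder with the conjugate pair $(p',p)$ bounds the right-hand side by $\bigl\|(1+\gamma^{2})^{s/2}\F_U(T)\bigr\|_{L^{p'}(\Ghat)}\,\|\varphi\|_{L^{p}(\Ghat)}=\bigl\|(1+\gamma^{2})^{s/2}\F_U(T)\bigr\|_{L^{p'}(\Ghat)}\,\|W\|_{\mathfrak{H}^{-s,p'}_{\gamma}}$; the first factor is then controlled by $C\|T\|_{S_{p}}$ via the Hausdorff--Young-type bound of Theorem~\ref{thm:qft-properties}\,(2) together with Assumption~\ref{ass:p-one}, and a density argument (in $T$ for $\|\cdot\|_{S_{p}}$, in $W$ for $\|\cdot\|_{\mathfrak{H}^{-s,p'}_{\gamma}}$, absorbing the errors through the estimate just obtained) extends the inequality to all $T\in S_{p}(H)$, $W\in\mathfrak{H}^{-s,p'}_{\gamma}$. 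This is exactly the assertion that $\iota(T)=L_{T}\in(\mathfrak{H}^{-s,p'}_{\gamma})^{*}$ with $\|\iota(T)\|\le C\|T\|_{S_{p}}$.

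For part~(ii) both non-degeneracy statements follow by testing against rank-one operators. A rank-one operator $R$ lies in $S_{1}(H)\subseteq S_{p'}(H)$, and since $p'<2$ Theorem~\ref{thm:qft-properties}\,(2) gives $\F_U(R)\in L^{q'}(\Ghat)$; as $(1+\gamma^{2})^{-s/2}\le1$ also $(1+\gamma^{2})^{-s/2}\F_U(R)\in L^{q'}(\Ghat)$, so $R\in\mathfrak{H}^{-s,p'}_{\gamma}$. Hence if $T\in S_{p}(H)$ annihilates all of $\mathfrak{H}^{-s,p'}_{\gamma}$ under the pairing, then $\operatorname{tr}(TR^{*})=0$ for every rank-one $R$, i.e.\ every matrix coefficient of $T$ vanishes, so $T=0$. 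Conversely, if $W\in\mathfrak{H}^{-s,p'}_{\gamma}$ pairs to zero against every $T\in S_{p}(H)$, then testing with rank-one $T$'s (which belong to $S_{p}(H)$) forces $W=0$; the ``$S=0$'' printed at the end of~(ii) is evidently a misprint for ``$W=0$''.

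For part~(iii), injectivity of $\iota$ is the first half of~(ii), so $\iota^{-1}$ is defined on $\iota(S_{p}(H))$. The additional point is that $\iota$ is bounded below: for $V\in S_{p'}(H)$ with $\|V\|_{S_{p'}}\le1$ one has $V\in\mathfrak{H}^{-s,p'}_{\gamma}$ and
\[
\|V\|_{\mathfrak{H}^{-s,p'}_{\gamma}}=\bigl\|(1+\gamma^{2})^{-s/2}\F_U(V)\bigr\|_{L^{q'}}\le\|\F_U(V)\|_{L^{q'}}\le\|V\|_{S_{p'}}\le1
\]
(the middle step since $(1+\gamma^{2})^{-s/2}\le1$, the next by Theorem~\ref{thm:qft-properties}\,(2) as $p'<2$), so, using the Schatten duality $S_{p}(H)=(S_{p'}(H))^{*}$ realised by $\operatorname{tr}(\cdot\,\cdot^{*})$,
\[
\|T\|_{S_{p}}=\sup_{\|V\|_{S_{p'}}\le1}\bigl|\operatorname{tr}(TV^{*})\bigr|\le\sup_{\|W\|_{\mathfrak{H}^{-s,p'}_{\gamma}}\le1}\bigl|\iota(T)(W)\bigr|=\|\iota(T)\|_{(\mathfrak{H}^{-s,p'}_{\gamma})^{*}} .
\]
Combined with the upper bound from~(i), $\iota\colon S_{p}(H)\to(\mathfrak{H}^{-s,p'}_{\gamma})^{*}$ is a topological isomorphism onto its range; since $S_{p}(H)$ is complete that range is closed and therefore coincides with its closure $\mathfrak{H}^{s,p}_{\gamma}$, which is thus a Banach space, and $\iota^{-1}\colon\mathfrak{H}^{s,p}_{\gamma}\to S_{p}(H)$ is the asserted canonical continuous embedding, satisfying $\|\iota^{-1}(\xi)\|_{S_{p}}\le\|\xi\|_{\mathfrak{H}^{s,p}_{\gamma}}$ by the last display.

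The step I expect to be the real obstacle is the inequality $\bigl\|(1+\gamma^{2})^{s/2}\F_U(T)\bigr\|_{L^{p'}}\le C\|T\|_{S_{p}}$ needed in~(i). For $p>2$ the quantum Fourier transform is not a priori defined on all of $S_{p}(H)$ --- the ``coefficients'' $\operatorname{tr}(T\pi(\xi)^{*})$ are traces of operators that need not be trace class --- so one cannot simply write the Fourier integral for an arbitrary $T$; the estimate has to be established first on a dense subspace where $\F_U(T)$ is a genuine function, and the $T$-independent constant then recovered through the $S_{p}$--$S_{p'}$ duality. Moreover $(1+\gamma^{2})^{s/2}$ is in general unbounded, so it cannot be pulled out of the $L^{p'}$-norm and must be kept paired against the compensating factor $(1+\gamma^{2})^{-s/2}$ carried by $\|W\|_{\mathfrak{H}^{-s,p'}_{\gamma}}$; in effect one is forced to dominate $\|W\|_{S_{p'}}$ by $\|W\|_{\mathfrak{H}^{-s,p'}_{\gamma}}$, and checking that the standing hypotheses under which $\mathfrak{H}^{-s,p'}_{\gamma}$ is a Banach space actually permit this --- for instance via boundedness of $(1+\gamma^{2})^{s/2}$ and the dual Hausdorff--Young inequality on the relevant exponent pair --- is where the argument will need the most care.
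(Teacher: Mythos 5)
There is a genuine gap, and it sits exactly where you predicted: part (i) is never actually proved. Your reduction on the dense family $\mathcal D$ leads to the inequality $\bigl\|(1+\gamma^2)^{s/2}\F_U(T)\bigr\|_{L^{p'}}\le C\|T\|_{S_p}$, which you propose to justify by Theorem \ref{thm:qft-properties}(2) plus Assumption \ref{ass:p-one}. But Theorem \ref{thm:qft-properties}(2) is restricted to $p\in[1,2]$, whereas here $p>2$ (for such $p$ the formula $\operatorname{tr}(T\pi(\xi)^*)$ does not even define $\F_U(T)$ on all of $S_p(H)$), Assumption \ref{ass:p-one} concerns the inverse transform and does not yield this bound, and the factor $(1+\gamma^2)^{s/2}\ge 1$ is in general unbounded, so the inequality is simply false without further hypotheses on $\gamma$ --- as your own closing paragraph concedes. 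The paper takes the dual route: its Step 1 places the weight on the $W$ side, claiming $\|W\|_{S_{p'}}\le C_1\|W\|_{\mathfrak{H}^{-s,p'}_{\gamma}}$ (via the inverse-transform bound on the pair $(q',p')$), and then finishes with Schatten--H\"older, $|\operatorname{tr}(TW^*)|\le\|T\|_{S_p}\|W\|_{S_{p'}}$. Your suggested fallback (``dominate $\|W\|_{S_{p'}}$ by $\|W\|_{\mathfrak{H}^{-s,p'}_{\gamma}}$'') is precisely that step, and by duality it is equivalent to your inequality, so the difficulty is the same either way; the difference is that you flag it as an obstacle and stop, so (i) --- and hence the quantitative half of (iii) --- is left unestablished in your write-up. (For fairness: the paper's Step 1 also needs more than the cited Hausdorff--Young inequality, since that inequality maps $L^{p'}\to S_{q'}$ and not $L^{q'}\to S_{p'}$, and the weight must additionally be controlled; but a proof has to contain the step, not only locate it.)

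The remaining parts are sound conditional on (i), and in places sharper than the paper. Your proof of (ii) by testing against rank-one operators (which lie in $\mathfrak{H}^{-s,p'}_{\gamma}$ because $p'<2$, Theorem \ref{thm:qft-properties}(2), and $(1+\gamma^2)^{-s/2}\le 1$) avoids the paper's density argument through $\mathcal D$, which rests on the dubious claim that each $W_\varphi$ is of finite rank. In (iii), your lower bound $\|T\|_{S_p}\le\|\iota(T)\|_{(\mathfrak{H}^{-s,p'}_{\gamma})^*}$, obtained from the inclusion of the $S_{p'}$ unit ball in the $\mathfrak{H}^{-s,p'}_{\gamma}$ unit ball together with Schatten duality, is a genuine improvement: it shows $\iota$ is bounded below, hence has closed range, so the closure defining $\mathfrak{H}^{s,p}_{\gamma}$ coincides with $\iota(S_p(H))$ and every element corresponds to a unique Schatten operator --- exactly the point the paper only gestures at with its ``Riesz-style identification.'' None of this, however, repairs the missing estimate in (i), on which both your argument and the paper's ultimately hinge.
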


\begin{proof}

\medskip\noindent\textbf{1.}
Fix $W\in \mathfrak{H}^{-s,p'}_{\gamma}$. By the definition of $\mathfrak{H}^{-s,p'}_{\gamma}$ we have
\[
(1+\gamma(\cdot)^2)^{-s/2}\F_U(W) \in L^{q'}(\widehat G)
\]
Using the Hausdorff--Young bound   from Theorem \ref{thm:qft-properties} (item 3) (recall $1\le p'\!<2$,
$1/p'+1/q'=1$), we obtain a control of the Schatten $(p')$–norm of $W$ in terms of the
weighted $L^{q'}$–norm: there exists a constant $C_1=C_1(\gamma,s,p)$ such that
\[
\|W\|_{S_{p'}} \le C_1\,\big\|(1+\gamma^2)^{-s/2}\F_U(W)\big\|_{L^{q'}} = C_1\,\|W\|_{\mathfrak{H}^{-s,p'}_{\gamma}}
\]

\medskip\noindent\textbf{2.}
Let $T\in S_p(H)$ be fixed. By Hölder's inequality for Schatten classes 
we have
\[
|\operatorname{tr}(T W^*)| \le \|T\|_{S_p}\,\|W\|_{S_{p'}}.
\]
Combining this with the estimate from Step 1 yields
\[
|\langle T,W\rangle| = |\operatorname{tr}(T W^*)|
\le \|T\|_{S_p}\,\|W\|_{S_{p'}}
\le C_1\,\|T\|_{S_p}\,\|W\|_{\mathfrak{H}^{-s,p'}_{\gamma}}
\]
Thus $L_T$ is continuous on $\mathfrak{H}^{-s,p'}_{\gamma}$ and
$\|L_T\|_{(\mathfrak{H}^{-s,p'}_{\gamma})^*}\le C_1\|T\|_{S_p}$. This proves claim (i) with $C=C_1$.

\medskip\noindent\textbf{3.}
Assume $T\in S_p(H)$ satisfies
\[
  \operatorname{tr}\bigl(T\,W^*\bigr) \;=\; 0
  \quad\forall W\in \mathfrak{H}^{-s,p'}_\gamma(G,H).
\]
We wish to show $T=0$. Recall from Proposition 3.2 that the test family
\[
  \mathcal{D}
  \;=\;\bigl\{\,W_\varphi = \mathcal \F_U^{-1}\bigl((1+\gamma^2)^{-s/2}\,\varphi\bigr)
    : \varphi\in C_0(\hat G)\bigr\}
  \;\subset\; \mathfrak{H}^{-s,p'}_\gamma(G,H)
\]
is \emph{dense} in $\mathfrak{H}^{-s,p'}_\gamma(G,H)$ and contains all finite‐rank operators $\mathcal{R}$.
Indeed, if $\varphi\in C_0(\hat G)$, then $W_\varphi$ is obtained by inverse Fourier
transform of a compactly supported function, hence of finite rank. Therefore: $\mathcal{R}\subset\;\mathcal{D}$ and $\overline{\mathcal{R}}_{\|\cdot\|_{\mathfrak{H}^{-s,p'}_\gamma}}=\mathfrak{H}^{-s,p'}_\gamma(G,H).$

By hypothesis, $\operatorname{tr}\bigl(T\,W^*\bigr) \;=\; 0 $ $\forall\,W\in \mathfrak{H}^{-s,p'}_\gamma(G,H).$
Restricting to $W\in\mathcal{R}$ implies $\operatorname{tr}\bigl(T\,R^*\bigr) \;=\;0
  \quad\forall\,R\in \mathcal{R}.$
Since $\mathcal{R}$ is dense in the Schatten class $S_{p'}(H)$ and the trace pairing $(T,R)\mapsto\operatorname{tr}(T\,R^*)$ is continuous on $S_p\times S_{p'}$,  it follows by continuity that
\[
  \operatorname{tr}\bigl(T\,R^*\bigr) =0
  \quad\forall\,R\in S_{p'}(H).
\]
This forces $T=0$. The same density argument applies if one assumes $\operatorname{tr}(T\,W^*)=0$ for all $T\in S_p(H)$ and wishes to conclude $W=0$.

\medskip\noindent\textbf{4.}
By Step 2 we have a continuous linear map
\(\iota: S_p(H)\to (\mathfrak{H}^{-s,p'}_{\gamma})^*,\; \iota(T)=L_T.\)
Define \(\mathfrak{H}^{s,p}_{\gamma}\) to be the closure of \(\iota(S_p(H))\) in \((\mathfrak{H}^{-s,p'}_{\gamma})^*\)
with the operator norm. Since \((\mathfrak{H}^{-s,p'}_{\gamma})^*\) is Banach, its closed subspace
\(\mathfrak{H}^{s,p}_{\gamma}\) is Banach as well. The estimate in Step 2 gives the continuous
injection \(S_p(H)\hookrightarrow \mathfrak{H}^{s,p}_{\gamma}\) (via $\iota$), and every element of
$\mathfrak{H}^{s,p}_{\gamma}$ is represented by a (unique) continuous functional on $\mathfrak{H}^{-s,p'}_{\gamma}$
which, by Riesz-style identification through the trace pairing, corresponds to a (unique)
Schatten operator on $H$. Thus $\mathfrak{H}^{s,p}_{\gamma}$ canonically embeds into $S_p(H)$ and the
claimed norm inequality holds.
\end{proof}

In analogy with the classical situation, we distinguish between the
\emph{inhomogeneous} quantum Sobolev space
\[
\mathfrak{H}^{s,p}_\gamma(G,H)
= \Bigl\{ T\in S_p(H): (1+\gamma(\cdot)^2)^{s/2} \,\mathcal{F}_U(T)\in L^q(\widehat G)\Bigr\}
\]
with norm
\[
\|T\|_{\mathfrak{H}^{s,p}_\gamma}
= \|(1+\gamma^2)^{s/2}\,\mathcal{F}_U(T)\|_{L^q}
\]
and the \emph{homogeneous} quantum Sobolev space
\[
\dot{\mathfrak{H}}^{s,p}_\gamma(G,H)
= \Bigl\{ T\in S_p(H): \gamma(\cdot)^s \,\mathcal{F}_U(T)\in L^q(\widehat G)\Bigr\}
\]
with norm
\[
\|T\|_{\dot{\mathfrak{H}}^{s,p}_\gamma}
= \|\gamma^s\,\mathcal{F}_U(T)\|_{L^q}.
\]

Let $1<p\le2$, $s>0$, and set $q=\dfrac{p}{p-1}>2$.  Fix $\alpha>0$ and assume one of the following
two verifiable integrability hypotheses holds:
\begin{enumerate}
\item\emph{(Inhomogeneous case)} \quad $(1+\gamma(\cdot)^2)^{-s/2}\in L^\alpha(\widehat G)$.
\item\emph{(Homogeneous case)} \quad $\gamma(\cdot)^{-s}\in L^\alpha(\widehat G)$.
\end{enumerate}

Define
\[
\sigma:=\frac{\alpha q}{\alpha+q}\qquad\text{and}\qquad
\beta:=\frac{\alpha q}{\alpha(q-1)-s}.
\]
Assume moreover that $1<\sigma\le 2$. Then we have

\begin{proposition}
\label{thm:main-embedding-corrected}
\begin{itemize}
\item[(i)] If (1) is satisfied and $T\in \mathfrak{H}^{s,p}_\gamma(G,H)$, then $\F_U(T)\in L^\sigma(\widehat G)$ and $T\in S_{\beta}(H), \qquad \|T\|_{S_\beta}\le C\,\|T\|_{\mathfrak{H}^{s,p}_\gamma}$ for a constant $C$ depending only on $\alpha,q,s,\gamma$ and the Hausdorff--Young constant $C_{HY}$.

\item[(ii)] If (2) is satisfied and $T\in\dot{\mathfrak{H}}^{s,p}_{\gamma(G,H)}$, then $\F_U(T)\in L^\sigma(\widehat G)$ and $T\in S_{\beta}(H),$ \\$  \|T\|_{S_\beta}\le C\,\|T\|_{\dot{\mathfrak{H}}^{s,p}_\gamma}$
with the same type of constant $C$.
\end{itemize}
\end{proposition}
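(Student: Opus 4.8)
The plan is to treat both cases uniformly by writing $w$ for the relevant weight, namely $w = (1+\gamma^2)^{s/2}$ in case (i) and $w = \gamma^s$ in case (ii), so that the hypothesis reads $w^{-1}\in L^\alpha(\widehat G)$ and the Sobolev norm is $\|T\|_{\mathfrak H} = \|w\,\F_U(T)\|_{L^q}$. First I would establish that $\F_U(T)\in L^\sigma(\widehat G)$. The key is the factorisation $\F_U(T) = w^{-1}\cdot\bigl(w\,\F_U(T)\bigr)$, where the second factor lies in $L^q$ by assumption and the first lies in $L^\alpha$ by hypothesis. Hölder's inequality then gives $\F_U(T)\in L^\sigma$ with $\tfrac1\sigma = \tfrac1\alpha + \tfrac1q$, i.e. $\sigma = \tfrac{\alpha q}{\alpha+q}$, and the quantitative bound
\[
\|\F_U(T)\|_{L^\sigma} \;\le\; \|w^{-1}\|_{L^\alpha}\,\|w\,\F_U(T)\|_{L^q} \;=\; \|w^{-1}\|_{L^\alpha}\,\|T\|_{\mathfrak H}.
\]
This step is essentially automatic once the exponent bookkeeping is done; the only thing to check is $\sigma\ge 1$, which is exactly where the standing assumption $1<\sigma\le 2$ enters.

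The second step converts the $L^\sigma$ membership of $\F_U(T)$ into a Schatten estimate on $T$. Since $1<\sigma\le 2$, Theorem \ref{thm:qft-properties}(3) (Hausdorff--Young) applies with $p$-exponent $\sigma$ and conjugate exponent $\sigma' = \tfrac{\sigma}{\sigma-1}$: because $T = \F_U^{-1}(\F_U(T))$ (valid since $T\in S_p\subset S_2$ and $\F_U$ is the unitary from Theorem \ref{thm:qft-properties}(1)), we obtain $T\in S_{\sigma'}(H)$ with
\[
\|T\|_{S_{\sigma'}} \;\le\; C_{HY}\,\|\F_U(T)\|_{L^\sigma} \;\le\; C_{HY}\,\|w^{-1}\|_{L^\alpha}\,\|T\|_{\mathfrak H}.
\]
It remains to verify the arithmetic identity $\sigma' = \beta$, i.e. that $\tfrac{\sigma}{\sigma-1} = \tfrac{\alpha q}{\alpha(q-1)-s}$. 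Substituting $\sigma = \tfrac{\alpha q}{\alpha+q}$ gives $\sigma - 1 = \tfrac{\alpha q - \alpha - q}{\alpha+q}$, hence $\sigma' = \tfrac{\alpha q}{\alpha q - \alpha - q}$; and since $q = \tfrac{p}{p-1}$ we have $\tfrac1{q} = 1 - \tfrac1p$ — but in fact I should double-check whether the intended relation involves $s$ at all, as the bare Hölder/Hausdorff--Young chain produces $\tfrac{\alpha q}{\alpha q-\alpha-q}$, which matches $\beta$ only if $\alpha = s/(q-1)\cdot(\text{something})$; this reconciliation of the stated $\beta$ with the one the argument yields is the point that needs care.

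That arithmetic reconciliation is what I expect to be the main obstacle — not a deep one, but the definition $\beta = \tfrac{\alpha q}{\alpha(q-1)-s}$ suggests the authors intend a slightly different splitting, most plausibly one in which the weight is only used to ``absorb'' a fractional power: one writes $\F_U(T) = w^{-\theta}\cdot\bigl(w^{\theta}\F_U(T)\bigr)$ for a suitable $\theta\in(0,1]$, or equivalently one interpolates, so that the power of $\gamma$ appearing in the denominator of $\beta$ carries the factor $s$ explicitly. I would therefore revisit Step 1 allowing $w^{-1}\in L^\alpha$ to be leveraged through the pointwise bound $w^{-s\cdot(\text{exponent})}$ implicit in $w = (1+\gamma^2)^{s/2}$, recompute the Hölder exponent accordingly, and confirm that the resulting Schatten index is precisely $\beta$; once the exponents are pinned down, both (i) and (ii) follow by the identical two-line argument with $w^{-1}\in L^\alpha$ supplied by hypothesis (1) or (2) respectively. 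Finally I would collect the constant as $C = C_{HY}\,\|w^{-1}\|_{L^\alpha}^{\theta}$ (with $\theta=1$ in the naive version), note it depends only on $\alpha,q,s,\gamma,C_{HY}$ as claimed, and remark that the hypothesis $1<\sigma\le 2$ is used exactly once, to license the application of Hausdorff--Young.
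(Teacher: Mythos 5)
Your argument is the same as the paper's: factor $\F_U(T) = w^{-1}\cdot\bigl(w\,\F_U(T)\bigr)$, apply H\"older with $1/\sigma = 1/\alpha + 1/q$, then Hausdorff--Young (Theorem~\ref{thm:qft-properties}(3), licensed exactly by $1<\sigma\le 2$) to conclude $T\in S_{\sigma'}(H)$ with $\|T\|_{S_{\sigma'}}\le C_{HY}\,\|w^{-1}\|_{L^\alpha}\,\|T\|_{\mathfrak{H}}$, both cases (i) and (ii) being handled by the same two lines. The arithmetic mismatch you flagged at the end is real, but it is not something to repair by a $\theta$-power splitting or interpolation: with $\sigma=\alpha q/(\alpha+q)$ one gets $\sigma'=\alpha q/(\alpha(q-1)-q)$, which coincides with the stated $\beta=\alpha q/(\alpha(q-1)-s)$ only if $s$ is replaced by $q$. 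The paper's own proof runs precisely your chain and then, in its last line, substitutes $\sigma=\alpha q/(\alpha+s)$ --- contradicting its own definition of $\sigma$ --- to force $\sigma'=\beta$; so the discrepancy you noticed is an internal inconsistency in the statement (in the formula for $\beta$, or equivalently for $\sigma$), not a gap in your reasoning. Your proof, stopped at the honest conclusion $T\in S_{\sigma'}(H)$ with $\sigma'=\alpha q/(\alpha q-\alpha-q)$ and the displayed norm bound, is exactly what the paper's argument actually establishes; the speculative final paragraph about re-splitting the weight leads nowhere and can be dropped.
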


\begin{proof}
We treat both cases by the same argument. Take $T$ in the relevant Sobolev space (either $T\in \mathfrak{H}^{s,p}_\gamma$ or
$T\in\dot{\mathfrak{H}}^{s,p}_\gamma$). Set the multiplier \(m(\xi)\) to be
\[
m(\xi)=
\begin{cases}
(1+\gamma(\xi)^2)^{-s/2} &\text{inhom. case }\\[4pt]
\gamma(\xi)^{-s} &\text{hom. case }
\end{cases}
\]

By the respective hypothesis we have \(m\in L^\alpha(\widehat G)\). Now write pointwise
\[
\F_U(T)(\xi)= m(\xi)\cdot \bigg( m(\xi)^{-1}\F_U(T)(\xi)\bigg)
\]
where \(m^{-1}\F_U(T)\) is exactly the Fourier–side quantity whose \(L^q\)-norm
coincides with the Sobolev norm of \(T\) (either \((1+\gamma^2)^{s/2}\F_U(T)\)
or \(\gamma^s \F_U(T)\)). Apply Hölder's inequality on \(\widehat G\) with
exponents \(\alpha\) and \(q\) chosen so that
\[
\frac{1}{\sigma}=\frac{1}{\alpha}+\frac{1}{q}
\]
By the definition \(\sigma=\dfrac{\alpha q}{\alpha+q}\) this identity holds.
Hence
\[
\|\F_U(T)\|_{L^\sigma}
\le \|m\|_{L^\alpha}\,\|m^{-1}\F_U(T)\|_{L^q}
= C_1\,\|T\|_{\mathcal H} \quad \mathcal{H} =  {\mathfrak{H}^{s,p}_\gamma} \text{ or } {\dot{\mathfrak{H}}^{s,p}_\gamma}
\]

Since by assumption \(1<\sigma\le2\), the operator form of the
Hausdorff--Young inequality gives a bounded map
\[
\mathcal{F}_U^{-1}:L^\sigma(\widehat G)\longrightarrow S_{\sigma'}(H),
\qquad \sigma'=\frac{\sigma}{\sigma-1}
\]
Applying that to \(f=\F_U(T)\) yields
\[
\|T\|_{S_{\sigma'}} \le C_{\mathrm{HY}}\,\|\F_U(T)\|_{L^\sigma}
\le C_{\mathrm{HY}}\,C_1\,\|T\|_{\mathcal H}
\]
Finally compute the exponent \(\sigma'\). Substituting
\(\sigma=\dfrac{\alpha q}{\alpha+s}\) gives
\[
\sigma'=\frac{\sigma}{\sigma-1}=\frac{\alpha q}{\alpha(q-1)-s}=\beta.
\]
Combining the displayed estimates produces $\|T\|_{S_\beta}\le C\,\|T\|_{\mathcal H}$ with \(C=C_{\mathrm{HY}}C_1\), which is the asserted norm estimate and hence the claimed continuous embedding.
\end{proof}

Let $G$ be a locally compact abelian group and let $\pi$ be a projective representation
satisfying the integrability (or square--integrability) assumptions and the Heisenberg
multiplier conditions used in ~\cite{LakmonMensah2025}. Fix $s>0$ and $p>2$, and denote $p' = p/(p-1)<2$.
Assume the weight $\gamma:\widehat G\to(0,\infty)$ satisfies the integrability condition so that $\mathfrak{H}^{-s,p'}_\gamma(G,H)$ is well defined (by Theorem~\ref{thm:negative-well-defined}) as a Banach space. Then we have

\begin{proposition}
\label{thm:improved-4.1}
In general one cannot upgrade the embedding above to
\[
   \mathfrak{H}^{s,p}_\gamma(G,H)\hookrightarrow S_\beta(H), \qquad \beta<p,
\]
without further assumptions on $\gamma$ or on the representation $\pi$. In fact, for natural models such as $G=\mathbb R^n$ with the Schr\"odinger/Heisenberg representation, there exists a bounded sequence in $\mathfrak{H}^{s,p}_\gamma(G,H)$ whose $S_\beta$-norms diverge for every $\beta<p$.
\end{proposition}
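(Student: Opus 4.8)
The plan is to establish a \emph{negative} (sharpness) result, so the strategy is to exhibit an explicit counterexample rather than to prove a positive embedding. I would work with the concrete model $G=\mathbb R^n$, $H=L^2(\mathbb R^n)$, and $\pi$ the Schrödinger representation, for which the quantum Fourier transform $\F_U$ is (up to normalization) the Weyl transform and Theorem~\ref{thm:qft-properties} applies with the sharp Hausdorff--Young constants. The key observation is that the Sobolev norm $\|T\|_{\mathfrak H^{s,p}_\gamma}=\|(1+\gamma^2)^{s/2}\F_U(T)\|_{L^q}$ only controls $\F_U(T)$ in the \emph{single} exponent $L^q$ with $q>2$, whereas membership in $S_\beta$ with $\beta<p$ would, via Hausdorff--Young run in the reverse direction, require control of $\F_U(T)$ in some $L^{\sigma}$ with $\sigma<q$; no such control is available, and the gap can be made to blow up.

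Concretely, I would choose $\gamma$ bounded above and below away from $0$ and $\infty$ (say $\gamma\equiv 1$), so the weight is irrelevant and $\mathfrak H^{s,p}_\gamma$ coincides with $\{T\in S_p:\F_U(T)\in L^q\}$ with equivalent norm $\|\F_U(T)\|_{L^q}$. Then I would build the bad sequence $(T_N)$ on the Fourier side: let $f_N\in L^q(\widehat G)$ be functions normalized so that $\|f_N\|_{L^q}=1$ for all $N$ but whose $L^\sigma$-norms diverge for every $\sigma<q$ — for instance, $f_N$ supported on a set of measure $N$ with height $\sim N^{-1/q}$, or a superposition of bumps tuned so that $\|f_N\|_{L^\sigma}\to\infty$ for each fixed $\sigma<q$ while $\|f_N\|_{L^q}$ stays bounded. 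Set $T_N:=\F_U^{-1}(f_N)$, which lies in $S_q\subset S_p$ by part~1 and part~3 of Theorem~\ref{thm:qft-properties}, hence $T_N\in\mathfrak H^{s,p}_\gamma$ with $\|T_N\|_{\mathfrak H^{s,p}_\gamma}=1$. It then remains to show $\|T_N\|_{S_\beta}\to\infty$ for every $\beta<p$: since $1/\beta>1/p=1/q'$, i.e. $\beta<q'<q$, the dual exponent $\beta'$ satisfies $\beta'>q$, and one uses the \emph{lower} bound for the Weyl transform — or, more robustly, a direct spectral computation for the specific $f_N$ — to see that a small $S_\beta$-norm of $T_N$ would force an $L^{\beta'}$-type smallness on $f_N$ incompatible with the chosen divergence; alternatively one may arrange $f_N$ so that $T_N=\F_U^{-1}(f_N)$ is an explicit diagonal operator in the Hermite basis, making $\|T_N\|_{S_\beta}=(\sum_k s_k^\beta)^{1/\beta}$ directly computable and visibly divergent.

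The main obstacle is the last step: converting the divergence of $\|f_N\|_{L^\sigma}$ (an easy one-line estimate) into divergence of $\|T_N\|_{S_\beta}$ requires a \emph{lower} bound relating the Schatten norm of $\F_U^{-1}(f)$ to an $L^p$-norm of $f$, and the Hausdorff--Young inequalities in Theorem~\ref{thm:qft-properties} only go in the contracting direction. I would resolve this by passing to an explicit model in which $\F_U$ is diagonalized — e.g. realizing $\F_U^{-1}$ of a radial/lattice-adapted $f_N$ as an operator diagonal in the Hermite basis via the known intertwining of the Weyl transform with Laguerre expansions — so that singular values are read off directly and no reverse inequality is needed; this reduces the whole proposition to an elementary estimate on a sum of the form $\sum_k a_k^\beta$ versus $\sum_k a_k^q$ for a well-chosen sequence $(a_k)$ that is $\ell^q$-normalized but not in $\ell^\beta$ for any $\beta<q$, which is standard. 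Finally I would remark that the argument is robust: the same construction works for any $\gamma$ that is comparable to a polynomial (so the weight only shifts exponents by a fixed amount, never closing the $L^q$-versus-$L^\sigma$ gap), which is why the stated obstruction is genuine and cannot be removed without structural hypotheses on $\gamma$ or $\pi$.
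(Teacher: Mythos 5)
Your proposal follows essentially the same strategy as the paper's proof: the same model ($G=\mathbb R^n$ with the Schr\"odinger/Heisenberg representation, $\F_U$ the Weyl transform), the same device of a Fourier-side family normalized in $L^q$ whose norms diverge in other exponents, and the same definition $T_N:=\F_U^{-1}(f_N)$. One concrete discrepancy: the paper concentrates mass ($a_n=\varepsilon_n^{-1/q}\mathbf 1_{E_n}$, $|E_n|=\varepsilon_n\to0$, so divergence occurs for $\rho>q$), whereas you spread it (support of measure $N$, height $N^{-1/q}$, divergence for $\sigma<q$). Which direction is the relevant obstruction depends on the exponent bookkeeping for $p>2$, which the paper itself leaves ambiguous (its "fix $q$ associated to $p'$" conflicts with Definition~\ref{def:sobolev-p-intermediate}); before choosing the family you should pin down exactly which $L^\rho$-control of $\F_U(T)$ a bound on $\|T\|_{S_\beta}$, $\beta<p$, would demand in your conventions.

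The more important point is the step you flag as the "main obstacle": converting divergence of $\|f_N\|_{L^\sigma}$ into divergence of $\|T_N\|_{S_\beta}$ needs a \emph{lower} bound, and the Hausdorff--Young inequalities of Theorem~\ref{thm:qft-properties} only contract. You have diagnosed precisely the gap in the paper's own argument, which concludes "Hausdorff--Young requires $\rho>q$ to control $\|T_n\|_{S_\beta}$, hence $\|T_n\|_{S_\beta}\to\infty$" --- the failure of one upper-bound technique is not a proof of divergence, so the paper asserts rather than proves the key claim. Your proposed remedy (choose radial/Laguerre-type symbols, e.g. $f_N=\sum_k a_k\varphi_{k,k}$ in the special Hermite system, so that $T_N$ is diagonal in the Hermite basis and $\|T_N\|_{S_\beta}=\|a\|_{\ell^\beta}$ is read off) is the right way to obtain an honest lower bound and goes beyond what the paper does. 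But as written it is still only a sketch: the reduction to an "elementary $\ell^q$ versus $\ell^\beta$ comparison" closes the argument only if you can also bound $\|f_N\|_{L^q(\Ghat)}$ --- and hence $\|T_N\|_{\mathfrak{H}^{s,p}_\gamma}$, weight included --- in terms of $\|a\|_{\ell^q}$, which requires nontrivial $L^q$ estimates for Laguerre/special Hermite functions rather than an elementary computation. Also, taking $\gamma\equiv1$ makes the weight harmless but simultaneously voids the integrability hypotheses under which the positive embedding of Proposition~\ref{thm:main-embedding-corrected} holds; for the sharpness statement to carry weight you should exhibit a $\gamma$ satisfying those hypotheses and run the construction on a region where $(1+\gamma^2)^{s/2}$ is bounded. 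In short: same approach as the paper, with the genuine missing ingredient (the Schatten lower bound) correctly identified and a plausible route to supply it, but not yet carried out --- and the paper's own proof leaves exactly the same hole.
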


\begin{proof}
Consider the model case $G=\mathbb R^n$ with the standard Heisenberg representation. Fix $q$ associated to $p'$, and construct functions $a_n$ supported on sets of measure $\varepsilon_n\to0$, with
\[
   \|a_n\|_{L^q}=1 \quad \text{ but} \quad \|a_n\|_{L^\rho}\to\infty \quad
   \text{for all }\rho>q
\]
Such functions can be constructed explicitly, e.g. $a_n = \varepsilon_n^{-1/q} \textbf{1}_{E_n}$ with $|E_n|=\varepsilon_n$, so that $||a_n||_{L^q} =1$ and $||a_n||_{L^\rho} = \varepsilon_n^{1/\rho -1/q} \mapsto \infty$ for $\rho > q$. Let $T_n:=\F_U^{-1}(a_n)$. Then $\|T_n\|_{\mathfrak{H}^{s,p}_\gamma}$ is uniformly bounded ($L^q$-norm of $a_n$ is fixed), while for any $\beta<p$ Hausdorff--Young requires $\rho>q$ to control $\|T_n\|_{S_\beta}$. Hence $\|T_n\|_{S_\beta}\to\infty$ as $n\to\infty$, showing that no continuous embedding into $S_\beta$ is possible.
\end{proof}

The obstruction does not preclude improved embeddings under stronger
assumptions. For instance, if $(1+\gamma(\xi)^2)^{-r/2}\in L^1(\widehat G)$ for $r$ large enough, or if $\pi$ enjoys enhanced $L^\rho\to S_{\rho'}$ bounds with $\rho>2$, then one can obtain continuous embeddings $\mathfrak{H}^{s,p}_\gamma\hookrightarrow S_\beta$ for some $\beta<p$. Precise thresholds can be derived using the same weighted H\"older/Hausdorff--Young estimates as before.

\bigskip

\end{document}